\documentclass[12pt]{article}

\usepackage[T1]{fontenc}
\usepackage[utf8]{inputenc}
\usepackage{lmodern}
\usepackage{microtype}
\usepackage[letterpaper,margin=1in]{geometry}
\usepackage{amsmath,amssymb,amsthm,mathtools}
\usepackage{hyperref}

\newtheorem{theorem}{Theorem}
\newtheorem{lemma}[theorem]{Lemma}
\newtheorem{proposition}[theorem]{Proposition}
\newtheorem{corollary}[theorem]{Corollary}
\theoremstyle{remark}

\newcommand{\one}{\mathbf{1}}
\newcommand{\rev}{\mathrm{rev}}
\newcommand{\cL}{\mathcal{L}}
\newcommand{\cX}{\mathcal{X}}
\newcommand{\margin}{M}

\title{A Linear Lower Bound for Dominating Sets in $k$-Majority Tournaments}
\author{Jiangdong Ai\thanks{School of Mathematical Sciences and LPMC,
    Nankai University. {\tt jd@nankai.edu.cn}.
    Funded by the National Natural Science Foundation of China
    (No.\,12522117, No.\,12401456),
     the Fundamental and Interdisciplinary Disciplines Breakthrough Plan
    of the Ministry of Education of China (JYB2025XDXM207),
    and the Fundamental Research Funds for the Central Universities,
    Nankai University.}
     \hspace{2mm}
     Xiangjie Yi\thanks{School of Mathematical Sciences and LPMC,
    Nankai University. {\tt 2310184@mail.nankai.edu.cn}.}}
\date{}

\begin{document}
\maketitle

\begin{abstract}
A $k$-majority tournament on a finite vertex set is defined by $2k-1$ linear orders, with $u\to v$ when $u$ lies above $v$ in at least $k$ of
the orders.  Let $F(k)$ be the maximum, over all $k$-majority
tournaments, of the size of a minimum dominating set.  Alon,
Brightwell, Kierstead, Kostochka, and Winkler proved that $C_1k/\log k \leq F(k) \leq C_2k\log k$ for suitable positive constants $C_1$ and $C_2$.  In this paper, we prove the linear lower bound $F(k)\ge \left\lfloor\frac{k+1}{2}\right\rfloor $ for $k\ge 3$.

\end{abstract}

\section{Introduction}

Let $T=(V,E)$ be a tournament.  For $x,y\in V$, we write $x\to y$ when
$xy\in E$.  If $X,Y\subseteq V$, then $X\to Y$ means that for every
$y\in Y\setminus X$ there is an $x\in X$ such that $x\to y$; for a single vertex, we write $x\to Y$ in place of $\{x\}\to Y$.  A set
$D\subseteq V$ is a \emph{dominating set} if $D\to V$, and the minimum
size of such a set is denoted by $\gamma(T)$.  For a positive integer
$n$, let $[n]=\{1,\ldots,n\}$.

Let $P_1,\ldots,P_{2k-1}$ be linear orders on a finite set $V$.  They
\emph{realize} the tournament $T$ in which $u\to v$ if and only if $u$
lies above $v$ in at least $k$ of the orders.  A tournament realizable
in this way is called a \emph{$k$-majority tournament}.  Following
Alon, Brightwell, Kierstead, Kostochka, and Winkler~\cite{ABKKW}, define
\[
  F(k)=\max\{\gamma(T):T\text{ is a $k$-majority tournament}\}.
\]
The existence of this maximum is part of the result in~\cite{ABKKW}.

The realization of finite asymmetric relations---in particular, tournaments---as strict majority relations of profiles of linear orders goes back to McGarvey~\cite{McGarvey}.  Let $v(n)$ denote the least number of linear orders that suffices to realize every $n$-vertex
tournament.  Stearns~\cite{Stearns} proved that $v(n)=\Omega(n/\log n)$
and that $v(n)\le n+2$, and Erd\H{o}s and Moser~\cite{ErdosMoser}
improved the upper bound to $v(n)=O(n/\log n)$.  The older papers
themselves treat the more general realization problem for oriented preference patterns, in which ties are permitted.
Alon~\cite{AlonVoting} later studied realizations with a uniform quantitative lower bound on the majority supporting every prescribed arc.

Moon~\cite{Moon} gives standard graph-theoretic background on
tournaments, while Laslier~\cite{Laslier} surveys tournament solutions
and majority voting.  Large transitive subtournaments in majority
tournaments were studied by Milans, Schreiber, and
West~\cite{MilansSchreiberWest}; recent work of Shapira and
Yuster~\cite{ShapiraYuster} gives sharper Ramsey-type bounds for this
problem.  Coste, Flesch, Laison, McNicholas, and
Miyata~\cite{CosteEtAl} introduced weighted $k$-majority tournaments and
an approval-gap parameter related to dominating sets.

Alon et al.~\cite{ABKKW} proved that $F(2)=3$, that $F(3)\ge 4$, and
that there are positive absolute constants $c_1,c_2$ such that
\begin{equation}\label{eq:known-bounds}
  c_1\frac{k}{\log k}\le F(k)\le c_2 k\log k.
\end{equation}
Fidler~\cite{Fidler} later obtained substantially better upper bounds for small $k$, including $F(3)\le 12$. More recently, Bourneuf, Charbit, and Thomassé~\cite{BCT}, and
independently Charikar, Ramakrishnan, and Wang~\cite{CRW}, proved that every $(1/2-\varepsilon)$-majority digraph---in which $u\to v$ whenever
$u$ lies above $v$ in at least a $(1/2-\varepsilon)$-fraction of the orders---has a dominating set of size $O(\varepsilon^{-2})$.  Lowering the threshold in this way can only add arcs, but for a profile of
$2k-1$ orders the largest attainable minority fraction is
$(k-1)/(2k-1)=\frac12-\frac{1}{4k-2}$, so any
$0<\varepsilon<\frac{1}{4k-2}$ makes the corresponding digraph coincide with the strict-majority tournament.  Consequently these results yield
only $F(k)=O(k^2)$ for $k$-majority tournaments and do not improve the $O(k\log k)$ upper bound in~\eqref{eq:known-bounds}.

The lower-bound construction of Alon et al.~\cite{ABKKW} is
based on a closely related incidence-difference principle.
Its vertices are disjoint pairs $(A,B)$, and pairs of linear
orders indexed by the elements of the ground set realize the
margin $2\bigl(|A\cap B'|-|A'\cap B|\bigr)$.
Given $t$ prescribed vertices $(A_i,B_i)$, their construction
chooses $A$ to contain all the sets $B_i$ and then chooses $B$
so that $B\setminus A_i\neq\varnothing$ for every $i$.
Satisfying these simultaneous escape conditions requires a set
$B$ of logarithmic size and leads to a realization using
$\Theta(t\log t)$ orders.

Our construction retains the incidence-difference principle but
distributes the avoidance tasks among the coordinates of a
Cartesian product.  Coordinate $i$ is responsible only for the
$i$th prescribed vertex, while an unused row or column makes all
negative incidences vanish in that coordinate and preserves one
positive incidence.  Since the resulting local grid margin is
realized by four orders independently of the grid size, this
coordinatewise localization removes the logarithmic loss.

A tournament has \emph{property $S_t$} if, for every $t$-element set
$U\subseteq V$, there is a vertex $x\in V\setminus U$ such that
$x\to U$. Thus a tournament on more than $t$ vertices has property $S_t$ if and
only if it has no dominating set of size at most $t$, that is, if and only if $\gamma(T)\ge t+1$. 
Hence, to prove that $F(k) \ge t+1$,
it suffices to realize a tournament with property $S_t$ by $2k-1$ linear orders. Whether
such tournaments exist for every $t$ was asked by Sch\"utte and answered by
Erd\H{o}s~\cite{ErdosProblem}, who gave a probabilistic existence proof; see also Szekeres and
Szekeres~\cite{SzekeresSzekeres}. Our main result is the following explicit construction.

\begin{theorem}\label{thm:construction}
For every positive integer $t$, there is a tournament $T_t$ on
$\bigl(2(t+1)^2\bigr)^t
$ vertices that is realized by $4t+1$ linear orders and has property
$S_t$.  In particular, $T_t$ is a $(2t+1)$-majority tournament and
$\gamma(T_t)\ge t+1$.
\end{theorem}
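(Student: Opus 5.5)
We construct $T_t$ explicitly. Vertices are $t$-tuples $x=(x_1,\dots,x_t)$, and each coordinate $x_i$ lies in a local set $G$ of size $2(t+1)^2$. We take $G$ to be two copies of the grid $[t+1]\times[t+1]$, namely $G=\{\pm\}\times[t+1]\times[t+1]$; the precise shape of $G$ will be tuned below. For each coordinate $i$ we define four linear orders on $G$, chosen so that the sum of their pairwise comparison signs is a \emph{local grid margin} $m(a,b)$. Each local order is lifted to an order on $V=G^t$ that is lexicographic with coordinate $i$ given top priority. The point of this lift is that, when $a_i\ne b_i$, the lifted order compares $a$ and $b$ exactly as the local order compares $a_i$ and $b_i$; when $a_i=b_i$ it is decided by the remaining coordinates through a fixed rule. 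We add one further order $L_0$ as a tie-breaker. This gives $4t+1$ orders in total, and the total margin is
\[
\margin(a,b)=\sum_i m(a_i,b_i)+(\text{correction terms}).
\]
Here $m(a_i,b_i)\in\{0,\pm2,\pm4\}$ is even, and the correction terms come from the coordinates with $a_i=b_i$ and from $L_0$.

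The core is to choose the four orders per coordinate so that $m$ behaves like the incidence-difference principle. Think of a grid point $(r,c)$ as the pair $A=\{r\}$ (a row) and $B=\{c\}$ (a column), placed in the two copies. The four orders are built from the row order, its reverse, the column order and its reverse, arranged so that
\[
m\bigl((r,c),(r',c')\bigr)=2\bigl([r=c']-[r'=c]\bigr)
\]
up to sign conventions. Equivalently, $m$ is the margin coming from the pair of relations ``same row'' versus ``same column''. This is exactly the $|A\cap B'|-|A'\cap B|$ formula in the introduction with singleton sets, realized by pairs of mutually reversed orders. Because the orders are built from reversals, the margin is independent of the grid size. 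I expect to check this with two pairs $P,\rev(P)$, each modified on the relevant incidences, giving a margin of $\pm2$ exactly on incidences.

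For property $S_t$, fix $t$ vertices $u^1,\dots,u^t$ and build $x$ coordinatewise. In coordinate $i$, the point $x_i$ must beat $u^i_i$ strictly: it should have a positive incidence with $u^i_i$ and no negative one, using the fact that the grid has a row and a column not used by any of $u^1_i,\dots,u^t_i$ (there are $t+1$ of each and only $t$ points). Against every other $u^j$ with $j\ne i$, the point $x_i$ should have no negative incidence with $u^j_i$. This is arranged by taking the \emph{unused} row or column, so that all negative incidences vanish, while preserving one positive incidence. The sum then gives $\margin(x,u^j)\ge 2$ from coordinate $j$ and $\ge0$ from every other coordinate. Since the total number of orders is odd, $\margin$ is odd, so a nonnegative even local sum together with the tie-breaker $L_0$ yields $x\to u^j$ as long as the local sum is at least $2$, or $L_0$ agrees. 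Finally $x\notin U$, because $x_j\ne u^j_j$ for each $j$.

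The main obstacle is the simultaneous requirement in each coordinate. The point $x_i$ must have a positive incidence with $u^i_i$ yet zero net negative incidence with all $u^j_i$, where $u^i_i$ itself may share its row or column with other $u^j_i$. This is why $G$ uses two copies (signs $\pm$) of a $(t+1)\times(t+1)$ grid: one copy lets $x_i$ sit in $u^i_i$'s column-as-row incidence while occupying an unused column. I will first try $x_i=(\text{row }=c(u^i_i),\ \text{column }=\text{an unused column})$ and verify the sign pattern. Only if that fails will I use the second copy to flip the roles of rows and columns.
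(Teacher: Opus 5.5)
\textbf{There is a genuine gap, and it is at the step you defer with ``I expect to check this'': the four-order local gadget.}

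The margin you write down, $m\bigl((r,c),(r',c')\bigr)=2\bigl([r=c']-[r'=c]\bigr)$ on a single $(t+1)\times(t+1)$ grid, cannot be realized by four linear orders once $t\ge 2$. Take $x=(1,3)$, $y=(2,1)$, $z=(3,2)$. Then $m(x,y)=m(y,z)=m(z,x)=2$, so the cyclic sum is $6$. A single linear order $L$ cannot agree with all three arcs of a directed triangle, nor with none of them. Hence $\sigma_L(x,y)+\sigma_L(y,z)+\sigma_L(z,x)=\pm1$, and four orders give a cyclic sum of at most $4$. Changing the sign convention does not help. The appeal to the $|A\cap B'|-|A'\cap B|$ principle does not help either: that realization spends a pair of orders on each ground-set element. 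That is about $2(t+1)$ orders per coordinate and $\Theta(t^2)$ overall, which destroys the linear bound.

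\textbf{The missing idea is a two-type (bipartite) local structure.} States are horizontal lines with a marked point and vertical lines with a marked point. The margin between two states of the same type is $0$, and the cross margin is $\psi(H_{r,a},V_{c,d})=[a=c]-[r=d]$. Every triangle then contains a same-type pair of margin $0$, so the obstruction above disappears. Moreover $2\psi$ splits into a column pair and a row pair of orders. In each pair, the matched blocks are listed in opposite sequence and reversed internally, while the $H$-block still precedes its $V$-block (resp.\ the $V$-block precedes its $H$-block). This works for every $q$.

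Your $\pm$ copies could play the two types, but only if $m$ vanishes within each copy. In that case $x_i$ must \emph{always} be taken in the copy opposite to $u^i_i$, not only as a fallback. Its marked point must lie on the line of $u^i_i$, and its own supporting line must contain none of the marked points of $u^1_i,\dots,u^t_i$.

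\textbf{Your lexicographic lift also leaves the ``correction terms'' uncontrolled.} Suppose the four lifted orders of coordinate $i$ break ties by the same fixed rule. Then a coincidence $x_i=u^j_i$ contributes $\pm4$, which can outweigh the $+2$ from coordinate $j$. You only verify $x_j\ne u^j_j$, and in your single-grid choice coincidences do occur. For example, with $u^1_1=(1,2)$ and $u^2_1=(2,3)$, the only index avoiding the rows is $3$, so $x_1=(2,3)=u^2_1$. There are two ways to fix this:
\begin{itemize}
\item order each fiber by $R$ in two of the four orders and by $R^{\rev}$ in the other two, so that within-fiber comparisons cancel; or
\item show $x_i\ne u^j_i$ for all $i,j$.
\end{itemize}
The second is automatic in the two-type construction, because the line of $x_i$ contains $p(x_i)$ but no $p(u^j_i)$.
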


Padding a realization by pairs of mutually reverse orders gives the
following consequence.

\begin{corollary}\label{cor:linear}
For every integer $k\ge 3$, $F(k)\ge \left\lfloor\frac{k+1}{2}\right\rfloor$.
Consequently,
$\liminf_{k\to\infty}\frac{F(k)}{k}\ge \frac12$.
\end{corollary}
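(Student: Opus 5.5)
The corollary follows from Theorem~\ref{thm:construction} once we show that $F$ is monotone, in the sense that a $k$-majority tournament is also a $k'$-majority tournament for every $k'\ge k$. Given $k\ge 3$, set $t=\lfloor (k-1)/2\rfloor\ge 1$. Then $2t+1\le k$, and
\[
t+1=\left\lfloor\frac{k+1}{2}\right\rfloor .
\]
Theorem~\ref{thm:construction} gives a tournament $T_t$ realized by $4t+1=2(2t+1)-1$ linear orders with $\gamma(T_t)\ge t+1$. It remains to realize the same $T_t$ by $2k-1$ orders.

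\textbf{Padding step.} Let $P_1,\dots,P_{2m-1}$ realize $T$ with $m=2t+1$. Append $k-m$ pairs $(Q,\rev(Q))$, where $Q$ is an arbitrary linear order on $V$ and $\rev(Q)$ is its reverse. This gives $2m-1+2(k-m)=2k-1$ orders. For distinct $u,v$, each appended pair contributes exactly one order with $u$ above $v$. So the number of orders placing $u$ above $v$ is $a+(k-m)$, where $a$ is the original count. Hence $a+(k-m)\ge k$ holds iff $a\ge m$, and the new profile realizes exactly $T$. Therefore $T_t$ is a $k$-majority tournament, and
\[
F(k)\ge\gamma(T_t)\ge t+1=\left\lfloor\frac{k+1}{2}\right\rfloor .
\]

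\textbf{Asymptotics.} Since $\lfloor (k+1)/2\rfloor\ge k/2$, we get $F(k)/k\ge 1/2$ for all $k\ge 3$. Taking the liminf gives $\liminf_{k\to\infty}F(k)/k\ge\frac12$.

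There is no real obstacle here beyond the parity bookkeeping, namely checking that $t$ is chosen so that $2t+1\le k$ and that $t+1$ matches the floor expression. The substantive content is entirely Theorem~\ref{thm:construction}.
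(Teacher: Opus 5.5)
Your proof is correct and follows the paper's argument essentially verbatim: you make the same choice $t=\lfloor (k-1)/2\rfloor$ and pad with $k-(2t+1)$ pairs of mutually reverse orders. You verify that the tournament is unchanged by counting orders with $u$ above $v$, where the paper notes that each reverse pair has zero margin, and you make explicit the inequality $\lfloor (k+1)/2\rfloor\ge k/2$ that the paper leaves implicit in the liminf step.
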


For small $k$ the displayed estimate is not best possible.  Indeed, the
bound $F(3)\ge4$ from~\cite{ABKKW}, together with padding by reverse
pairs, gives $F(k)\ge4$ for every $k\ge3$, which is at least as strong as Corollary~\ref{cor:linear} for $k\le8$. The significance of
Corollary~\ref{cor:linear} is that it improves the general lower bound
in~\eqref{eq:known-bounds} from order $k/\log k$ to order $k$.

The construction is exact rather than probabilistic.  Each local state is a line of the $(t+1)\times(t+1)$ grid together with a marked point on it, and the difference of two point--line incidences gives the antisymmetric margin function that the four local orders realize.

\section{Margins and a lifting lemma}

For a linear order $L$ on a finite set $V$, define, for distinct
$x,y\in V$,
\[
  \sigma_L(x,y)=
  \begin{cases}
    1,&\text{if $x$ lies above $y$ in $L$},\\
   -1,&\text{if $y$ lies above $x$ in $L$}.
  \end{cases}
\]
If $\cL=(L_1,\ldots,L_m)$ is a profile of linear orders, its
\emph{margin} is
\[
  \margin_{\cL}(x,y)=\sum_{j=1}^m \sigma_{L_j}(x,y).
\]
An \emph{antisymmetric margin function} on a finite set $V$ is an
integer-valued function $K:V\times V\to\mathbb Z$ satisfying
$K(x,x)=0$ and $K(x,y)=-K(y,x)$ for all $x,y\in V$.  We say that
$\cL$ \emph{realizes $K$ exactly} when $\margin_{\cL}(x,y)=K(x,y)$
for every pair of distinct vertices.

The following observation permits a local profile to be lifted through
a map without creating margins inside its fibers.

\begin{lemma}\label{lem:lifting}
Let $Z$ and $V$ be finite sets, let $\pi:V\to Z$, and suppose that
$L_1,\ldots,L_{2m}$ realize an antisymmetric margin function $K$ on
$Z$ exactly.  There are $2m$ linear orders on $V$ whose margin on
distinct $x,y\in V$ is
\[
  \widetilde K(x,y)=
  \begin{cases}
    K(\pi(x),\pi(y)),&\text{if $\pi(x)\ne\pi(y)$},\\
    0,&\text{if $\pi(x)=\pi(y)$}.
  \end{cases}
\]
\end{lemma}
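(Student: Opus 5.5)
The plan is to lift each order blockwise and to break ties inside fibers with a fixed order that alternates direction between paired orders. Fix an arbitrary linear order $R$ on $V$, and let $\rev(R)$ denote its reverse. Pair the given orders as $(L_1,L_2),(L_3,L_4),\ldots,(L_{2m-1},L_{2m})$. For each $j$, I would define a linear order $\widetilde L_j$ on $V$ lexicographically. If $\pi(x)\neq\pi(y)$, then $x$ lies above $y$ in $\widetilde L_j$ exactly when $\pi(x)$ lies above $\pi(y)$ in $L_j$. If $\pi(x)=\pi(y)$, then $x$ lies above $y$ when $x$ lies above $y$ in $R$ for odd $j$, and in $\rev(R)$ for even $j$.

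The first step is to check that each $\widetilde L_j$ is a genuine linear order. This is the standard lexicographic construction: the fibers $\pi^{-1}(z)$ are placed as consecutive blocks, ordered according to $L_j$ (empty fibers are simply skipped), and each block is internally ordered by $R$ or $\rev(R)$. Transitivity, totality and antisymmetry then follow immediately.

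The second step is the margin computation. For $\pi(x)\neq\pi(y)$ we have $\sigma_{\widetilde L_j}(x,y)=\sigma_{L_j}(\pi(x),\pi(y))$ for every $j$. Summing gives $K(\pi(x),\pi(y))$, because the $L_j$ realize $K$ exactly. For $\pi(x)=\pi(y)$ with $x\neq y$, the $2m$ signs cancel in pairs: $\sigma_{\widetilde L_{2i-1}}(x,y)=\sigma_R(x,y)$ and $\sigma_{\widetilde L_{2i}}(x,y)=-\sigma_R(x,y)$. The total is therefore $0$.

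There is no real obstacle here. The only point needing care is that the number of orders is even, which is what lets the within-fiber contributions cancel exactly. That is why the statement is phrased with $2m$ orders.
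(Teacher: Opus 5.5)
Your proof is correct and is essentially the paper's argument: order the fibers by $L_j$, and break ties inside each fiber by a fixed order $R$ in half of the orders and by its reverse in the other half. The only difference is cosmetic and immaterial — you alternate $R$ and its reverse by the parity of $j$, whereas the paper uses $R$ in the first $m$ orders and its reverse in the last $m$.
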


\begin{proof}
Fix a linear order $R$ on $V$.  For each $j\in[2m]$, order the fibers
$\pi^{-1}(z)$ according to the order $L_j$ of their labels.  Inside
each fiber use the restriction of $R$ in the first $m$ orders and the
restriction of $R^{\rev}$ in the last $m$ orders.

If $\pi(x)\ne\pi(y)$, the relative position of the two fibers is
prescribed by $L_j$, independently of the order inside either fiber.
The resulting margin is therefore $K(\pi(x),\pi(y))$.  If
$\pi(x)=\pi(y)$, the first $m$ contributions are cancelled by the last
$m$ contributions.
\end{proof}

Margins add when profiles are concatenated.  We shall therefore apply
Lemma~\ref{lem:lifting} independently in several coordinates and sum
the resulting margin functions.

\section{A four-order grid construction}

Fix an integer $q\ge2$, and let
$\Gamma_q=[q]\times[q]$
be the $q\times q$ grid.  For $r,a\in[q]$, let $H_{r,a}$ be the
horizontal state with supporting line and marked point
\[
  \ell(H_{r,a})=\{(u,r):u\in[q]\},
  \qquad
  p(H_{r,a})=(a,r).
\]
For $c,d\in[q]$, let $V_{c,d}$ be the vertical state with
\[
  \ell(V_{c,d})=\{(c,v):v\in[q]\},
  \qquad
  p(V_{c,d})=(c,d).
\]
Let
\[
  Z_q=\{H_{r,a}:r,a\in[q]\}\mathbin{\dot\cup}
      \{V_{c,d}:c,d\in[q]\}.
\]
Thus $|Z_q|=2q^2$, and the marked point of every state lies on its
supporting line.

Define the antisymmetric margin function $\psi$ on $Z_q$ by
\begin{equation}\label{eq:psi}
  \psi(z,w)=
  \one_{\{p(z)\in\ell(w)\}}-
  \one_{\{p(w)\in\ell(z)\}}.
\end{equation}
The same formula gives $\psi(z,z)=0$.

\begin{lemma}\label{lem:psi-values}
For all admissible indices,
\[
  \psi(H_{r,a},H_{s,b})=0,
  \qquad
  \psi(V_{c,d},V_{e,f})=0,
\]
and
\begin{equation}\label{eq:cross-value}
  \psi(H_{r,a},V_{c,d})=
  \one_{\{a=c\}}-\one_{\{r=d\}}.
\end{equation}
\end{lemma}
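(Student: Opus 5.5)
This is a direct computation from the definition \eqref{eq:psi}: for each pair of states, decide whether the marked point of one lies on the supporting line of the other. I split into three cases according to the types of the two states, horizontal--horizontal, vertical--vertical, and horizontal--vertical. The remaining mixed case, $\psi(V_{c,d},H_{r,a})$, follows from antisymmetry and needs no separate treatment.

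\emph{Horizontal--horizontal.} Take $H_{r,a}$ and $H_{s,b}$. Then $p(H_{r,a})=(a,r)$ lies on $\ell(H_{s,b})=\{(u,s):u\in[q]\}$ if and only if $r=s$. Symmetrically, $p(H_{s,b})=(b,s)$ lies on $\ell(H_{r,a})$ if and only if $s=r$. The two indicators in \eqref{eq:psi} therefore coincide, and their difference is $0$. This holds whether or not $r=s$, and it includes the case of equal states.

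\emph{Vertical--vertical.} The argument is the same with columns in place of rows. The point $(c,d)$ lies on the column $\{(e,v):v\in[q]\}$ if and only if $c=e$, which is the same condition as for the reverse incidence. Hence $\psi(V_{c,d},V_{e,f})=0$.

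\emph{Horizontal--vertical.} The point $p(H_{r,a})=(a,r)$ lies on $\ell(V_{c,d})=\{(c,v):v\in[q]\}$ exactly when $a=c$, since any second coordinate $r\in[q]$ is allowed. The point $p(V_{c,d})=(c,d)$ lies on $\ell(H_{r,a})=\{(u,r):u\in[q]\}$ exactly when $d=r$. Substituting these into \eqref{eq:psi} gives \eqref{eq:cross-value}.

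No step poses a real obstacle. The only thing to watch is the coordinate convention: a horizontal state $H_{r,a}$ has its row index $r$ in the \emph{second} coordinate and its marked position $a$ in the first. Getting this right is what produces $a=c$ rather than $r=c$ in \eqref{eq:cross-value}.
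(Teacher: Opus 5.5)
Your proposal is correct and matches the paper's proof: the same three-way case split by type, with each incidence reducing to equality of row indices (horizontal--horizontal), equality of column indices (vertical--vertical), or the conditions $a=c$ and $r=d$ in the mixed case. Your remark that $\psi(V_{c,d},H_{r,a})$ follows from antisymmetry goes beyond the statement, but it is harmless.
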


\begin{proof}
For two horizontal states, each incidence in~\eqref{eq:psi} is
equivalent to equality of the two row indices, so the two terms cancel.
The vertical--vertical case is the same with rows replaced by columns.
Finally, $(a,r)$ lies on the supporting column of $V_{c,d}$ exactly
when $a=c$, while $(c,d)$ lies on the supporting row of $H_{r,a}$
exactly when $r=d$.  This gives~\eqref{eq:cross-value}.
\end{proof}

\begin{proposition}\label{prop:four-orders}
The margin function $2\psi$ on $Z_q$ is realized exactly by four linear
orders.
\end{proposition}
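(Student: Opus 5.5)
The plan is to split $2\psi$ into two pieces according to~\eqref{eq:cross-value}, namely $2\cdot\one_{\{a=c\}}$ (a ``column'' term) and $-2\cdot\one_{\{r=d\}}$ (a ``row'' term), and to realize each piece exactly by a pair of linear orders. Margins add under concatenation, so the four orders together will then realize $2\psi$. By Lemma~\ref{lem:psi-values}, the target margin vanishes on horizontal--horizontal and vertical--vertical pairs. So each pair of orders must also cancel exactly on such pairs, and that is what the construction has to arrange.

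For the column term, partition $Z_q$ into blocks $G_1,\dots,G_q$, where $G_c=\{H_{r,c}:r\in[q]\}\cup\{V_{c,d}:d\in[q]\}$. Every state lies in exactly one block, since $H_{r,a}$ goes to $G_a$ and $V_{c,d}$ to $G_c$. Fix an auxiliary linear order $R$ on $Z_q$. The two orders are defined as follows.
\begin{itemize}
\item $L_1$ places the blocks as $G_1>G_2>\dots>G_q$. Inside each $G_c$ it puts all horizontal states above all vertical ones, and orders each of these two sub-blocks by $R$.
\item $L_2$ places the blocks in the reverse order $G_q>\dots>G_1$. Inside each $G_c$ it again puts the horizontal states above the vertical ones, but orders each sub-block by $R^{\rev}$.
\end{itemize}
Consider a pair $H_{r,a}$, $V_{c,d}$. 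If $a=c$, they share a block and the horizontal state is above in both orders, so the contribution is $+2$. If $a\neq c$, the block order is reversed between $L_1$ and $L_2$, so the contribution is $0$. For two states of the same type in different blocks, the contributions again cancel by the block reversal. For two states of the same type in the same block, they cancel by the choice of $R$ versus $R^{\rev}$. Hence $L_1,L_2$ realize exactly the function equal to $2\cdot\one_{\{a=c\}}$ on $(H_{r,a},V_{c,d})$ and $0$ on same-type pairs.

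For the row term, do the same with row blocks $G'_d=\{V_{c,d}:c\in[q]\}\cup\{H_{d,a}:a\in[q]\}$, but now put the vertical states above the horizontal ones inside each block. Call the resulting orders $L_3,L_4$: $L_3$ lists the blocks ascending with inner orders by $R$, and $L_4$ lists them descending with inner orders by $R^{\rev}$. The identical case check gives margin $-2\cdot\one_{\{r=d\}}$ on $(H_{r,a},V_{c,d})$ and $0$ on same-type pairs. Summing the two pairs of orders gives exactly $2\psi$, using~\eqref{eq:cross-value} and Lemma~\ref{lem:psi-values}.

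There is no real obstacle here. The only step requiring care is the bookkeeping that same-type pairs cancel in both the same-block and different-block cases, and that is exactly what the simultaneous reversal of the block order and of the internal order $R$ is designed to guarantee.
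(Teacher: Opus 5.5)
Your proposal is correct and is essentially the paper's proof. Your block $G_c$ is the paper's consecutive pair $\mathbf H^c\mathbf V^c$ in $C_1,C_2$, and your $G'_d$ is its pair $\widehat{\mathbf V}^{\,d}\widehat{\mathbf H}^{\,d}$ in $R_1,R_2$; in both proofs the block order and the internal orders are reversed while the horizontal (respectively vertical) sub-block stays on top. The only cosmetic difference is that you order the sub-blocks by an arbitrary auxiliary $R$ and $R^{\rev}$ rather than by the index order and its reverse, which works equally well.
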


\begin{proof}
For each $a\in[q]$, form the ordered blocks
\[
  \mathbf H^a=(H_{1,a},\ldots,H_{q,a}),
  \qquad
  \mathbf V^a=(V_{a,1},\ldots,V_{a,q}).
\]
We write blocks in succession to mean that every element of an earlier block lies above every element of a later one. Define the two column orders
\begin{align*}
  C_1&=\mathbf H^1\mathbf V^1\,
       \mathbf H^2\mathbf V^2\cdots
       \mathbf H^q\mathbf V^q,\\
  C_2&=(\mathbf H^q)^{\rev}(\mathbf V^q)^{\rev}\cdots
       (\mathbf H^1)^{\rev}(\mathbf V^1)^{\rev}.
\end{align*}
Note that $C_2$ is not the reverse of $C_1$: the pairs $\mathbf H^a\mathbf V^a$ occur in the reverse order and each block is
reversed internally, but $\mathbf H^a$ still precedes $\mathbf V^a$ for every $a$.

Consider first two states of the same type, say $H_{r,a}$ and
$H_{s,b}$.  If $a=b$, both lie in the block $\mathbf H^a$, whose
internal order is reversed in $C_2$; if $a\ne b$, the blocks
$\mathbf H^a$ and $\mathbf H^b$ occur in opposite orders in $C_1$ and $C_2$.  Either way the two contributions cancel, and the same argument
applies to vertical--vertical pairs. Hence the column pair has zero
margin on pairs of the same type.

Now consider $H_{r,a}$ and $V_{c,d}$.  If $a=c$, then $H_{r,a}$ lies above $V_{c,d}$ in both column orders, by the remark above.  If
$a\ne c$, the blocks $\mathbf H^a$ and $\mathbf V^c$ occur in opposite orders in $C_1$ and $C_2$.  Therefore
\begin{equation}\label{eq:column-pair}
  \sigma_{C_1}(H_{r,a},V_{c,d})+
  \sigma_{C_2}(H_{r,a},V_{c,d})
  =2\one_{\{a=c\}}.
\end{equation}

For each $r\in[q]$, form the blocks
\[
  \widehat{\mathbf V}^{\,r}=(V_{1,r},\ldots,V_{q,r}),
  \qquad
  \widehat{\mathbf H}^{\,r}=(H_{r,1},\ldots,H_{r,q}),
\]
and define the two row orders
\begin{align*}
  R_1&=\widehat{\mathbf V}^{\,1}\widehat{\mathbf H}^{\,1}\,
       \widehat{\mathbf V}^{\,2}\widehat{\mathbf H}^{\,2}\cdots
       \widehat{\mathbf V}^{\,q}\widehat{\mathbf H}^{\,q},\\
  R_2&=(\widehat{\mathbf V}^{\,q})^{\rev}
       (\widehat{\mathbf H}^{\,q})^{\rev}\cdots
       (\widehat{\mathbf V}^{\,1})^{\rev}
       (\widehat{\mathbf H}^{\,1})^{\rev}.
\end{align*}
Again $R_2$ is not the reverse of $R_1$: the pairs
$\widehat{\mathbf V}^{\,r}\widehat{\mathbf H}^{\,r}$ occur in the
reverse order and each block is reversed internally, but
$\widehat{\mathbf V}^{\,r}$ still precedes $\widehat{\mathbf H}^{\,r}$
for every $r$.  Exactly as above, the row pair has zero margin on pairs
of the same type.  For a cross pair, if $r=d$ then $V_{c,d}$ lies above
$H_{r,a}$ in both row orders, while if $r\ne d$ the blocks
$\widehat{\mathbf H}^{\,r}$ and $\widehat{\mathbf V}^{\,d}$ occur in
opposite orders.  Therefore
\begin{equation}\label{eq:row-pair}
  \sigma_{R_1}(H_{r,a},V_{c,d})+
  \sigma_{R_2}(H_{r,a},V_{c,d})
  =-2\one_{\{r=d\}}.
\end{equation}
Equations~\eqref{eq:column-pair} and~\eqref{eq:row-pair}, together with
Lemma~\ref{lem:psi-values}, show that $C_1,C_2,R_1,R_2$ have total
margin $2\psi$ on every pair of distinct states.
\end{proof}

\section{The product construction}

Fix a positive integer $t$ and let $q=t+1$.  The vertex set of the
global construction is
$\cX_t=Z_q^t$.
Write $x=(x_1,\ldots,x_t)$ for a vertex of $\cX_t$, and define
\begin{equation}\label{eq:Phi}
  \Phi(x,y)=\sum_{g=1}^t \psi(x_g,y_g).
\end{equation}
The function $\Phi$ is antisymmetric.

\begin{proposition}\label{prop:global-orders}
There are $4t$ linear orders on $\cX_t$ whose margin is $2\Phi$.
\end{proposition}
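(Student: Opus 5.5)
We apply Lemma~\ref{lem:lifting} once per coordinate and add up the resulting profiles. Fix $g\in[t]$ and let $\pi_g:\cX_t\to Z_q$ be the projection $x\mapsto x_g$. Proposition~\ref{prop:four-orders} gives four linear orders $C_1,C_2,R_1,R_2$ on $Z_q$ that realize $2\psi$ exactly, so Lemma~\ref{lem:lifting} applies with $m=2$. It produces four linear orders on $\cX_t$ whose margin on distinct $x,y$ is $2\psi(x_g,y_g)$ when $x_g\ne y_g$, and $0$ when $x_g=y_g$.

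The second case is harmless because $\psi(z,z)=0$, as noted right after~\eqref{eq:psi}. Hence in both cases the margin of the $g$th lifted profile is exactly $2\psi(x_g,y_g)$. The diagonal convention is what makes this work, so it should be stated explicitly.

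Concatenate the $t$ lifted profiles into a single profile of $4t$ linear orders on $\cX_t$. Margins are sums of $\sigma$-values over the orders, so they add under concatenation. For distinct $x,y\in\cX_t$ the total margin is therefore
\[
  \sum_{g=1}^t 2\psi(x_g,y_g)=2\Phi(x,y),
\]
by~\eqref{eq:Phi}. This proves the proposition.

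No step here is a real obstacle. The only point to check is that the lifting lemma's fiber-zero convention agrees with the diagonal value $\psi(z,z)=0$, and it does.
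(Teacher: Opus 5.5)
Your proof is correct and follows the paper's own argument: you lift the four grid orders through each projection $\pi_g$ via Lemma~\ref{lem:lifting} (with $m=2$) and concatenate the $t$ resulting profiles. Your explicit check that the zero-margin fiber case agrees with $\psi(z,z)=0$ fills in a step the paper leaves implicit.
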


\begin{proof}
For $g\in[t]$, let $\pi_g:\cX_t\to Z_q$ be the projection
$\pi_g(x)=x_g$.  Apply Lemma~\ref{lem:lifting} to the four orders in
Proposition~\ref{prop:four-orders} and the map $\pi_g$.  The resulting
four orders on $\cX_t$ have margin $2\psi(x_g,y_g)$ on $x,y$.
Concatenating these profiles for all $g\in[t]$ gives $4t$ orders with
margin
\[
  2\sum_{g=1}^t\psi(x_g,y_g)=2\Phi(x,y).
\]
\end{proof}

The next lemma is the point at which the choice $q=t+1$ is used.

\begin{lemma}\label{lem:common-vertex}
For any distinct $y^1,\ldots,y^t\in\cX_t$, there is a vertex
$x\in\cX_t\setminus\{y^1,\ldots,y^t\}$ such that
$\Phi(x,y^i)\ge1$ for every $i\in[t]$.
\end{lemma}

\begin{proof}
We choose $x=(x_1,\ldots,x_t)$ one coordinate at a time.  Coordinate
$g$ will give a positive contribution against $y^g$ and a nonnegative
contribution against every selected vertex.

Fix $g\in[t]$ and consider the set of marked points
\[
  P_g=\{p(y^1_g),\ldots,p(y^t_g)\}\subseteq\Gamma_q.
\]
Suppose first that $y^g_g=H_{r,a}$.  The points in $P_g$ occupy at most
$t$ columns, whereas $\Gamma_q$ has $q=t+1$ columns.  Choose a column
$c$ containing no point of $P_g$, and let
$x_g=V_{c,r}$.
The supporting line of $x_g$ contains none of the marked points
$p(y^i_g)$.  Hence~\eqref{eq:psi} gives
\[
  \psi(x_g,y^i_g)=
  \one_{\{p(x_g)\in\ell(y^i_g)\}}\ge0
  \qquad (i\in[t]).
\]
Since $p(x_g)=(c,r)$ lies on the supporting row of $y^g_g$, we also
have $\psi(x_g,y^g_g)=1$.

Suppose instead that $y^g_g=V_{c,d}$.  The points in $P_g$ occupy at
most $t$ rows.  Choose a row $r$ containing no point of $P_g$, and let $x_g=H_{r,c}$.
The same argument, with rows and columns interchanged, gives
\[
  \psi(x_g,y^i_g)\ge0\quad(i\in[t]),
  \qquad
  \psi(x_g,y^g_g)=1.
\]

Now fix $i\in[t]$.  Every term in~\eqref{eq:Phi} is nonnegative, and
the term with $g=i$ is one.  Therefore $\Phi(x,y^i)\ge1$.  Moreover,
$x_i$ and $y^i_i$ have opposite types, so $x_i\ne y^i_i$.  Hence
$x\ne y^i$ for every $i$, and $x$ lies outside the selected set.
\end{proof}

\section{Proof of the main results}

\begin{proof}[Proof of Theorem~\ref{thm:construction}]
By Proposition~\ref{prop:global-orders}, there are $4t$ linear orders
on $\cX_t$ with margin $2\Phi$.  Add an arbitrary linear order $L_*$ on
$\cX_t$.  The margin of the resulting odd profile is
\begin{equation}\label{eq:final-margin}
  \margin(x,y)=2\Phi(x,y)+\sigma_{L_*}(x,y).
\end{equation}
If $\Phi(x,y)\ge1$, then $\margin(x,y)\ge1$; if $\Phi(x,y)\le-1$, then
$\margin(x,y)\le-1$.  When $\Phi(x,y)=0$, the last order breaks the
tie.  Thus the profile realizes a tournament, and every pair having
nonzero $\Phi$-score is oriented according to the sign of $\Phi$.

The profile has $4t+1=2(2t+1)-1$ orders, so the tournament is a $(2t+1)$-majority tournament.  Given any
$t$ vertices $y^1,\ldots,y^t$, Lemma~\ref{lem:common-vertex} supplies
an outside vertex $x$ with $\Phi(x,y^i)\ge1$ for every $i$.  By
\eqref{eq:final-margin}, $x\to y^i$ for every $i$, and hence the
tournament has property $S_t$.

Finally, suppose that $D$ is a dominating set with $|D|\le t$.  Extend
$D$, if necessary, to a $t$-element set $U$.  Property $S_t$ gives a
vertex $x\notin U$ with $x\to U$, so no vertex of $D$ dominates $x$.
This contradiction shows that $\gamma(T_t)>t$, and therefore
$\gamma(T_t)\ge t+1$.
\end{proof}

\begin{proof}[Proof of Corollary~\ref{cor:linear}]
Let $k\ge3$, and let
$t=\left\lfloor\frac{k-1}{2}\right\rfloor$ and 
 $K=2t+1$.
Then $K\le k$.  The tournament $T_t$ in Theorem~\ref{thm:construction}
is realized by $2K-1$ orders.  If $K<k$, append $k-K$ pairs
$(L,L^{\rev})$ of mutually reverse linear orders.  Every such pair has
zero margin on every pair of vertices, so the tournament is unchanged,
while the number of orders becomes
\[
  (2K-1)+2(k-K)=2k-1.
\]
Thus $T_t$ is also a $k$-majority tournament, and
\[
  F(k)\ge\gamma(T_t)\ge t+1
  =\left\lfloor\frac{k+1}{2}\right\rfloor.
\]
Dividing by $k$ and taking the lower limit proves the final assertion.
\end{proof}

\section{Concluding remarks}

The construction uses four local orders for each vertex protected by
property $S_t$.  Its two essential features are the exact realization
of the marked row--column margin function and the availability of one
more row and column than the number of prescribed vertices.  No probabilistic
selection or asymptotic compression is used.

The constant $1/2$ has not been optimized.  One possible direction is
to replace the row--column gadget by a local incidence system that
protects several prescribed vertices per coordinate without a
proportional increase in the number of orders.  In view of the upper
bound in~\eqref{eq:known-bounds}, the principal remaining asymptotic
question is whether $F(k)=O(k)$.

\section*{Declaration on the use of generative AI}

During the preparation of this manuscript, the authors used OpenAI's
ChatGPT (5.6 Sol) in a limited supporting role for language polishing, presentation, bibliographic checks, and exploratory mathematical discussion.  Some mathematical ideas and elements of the construction
arose through interaction between the authors and these tools.  The authors directed the mathematical investigation, selected and developed the final construction and arguments, verified all proofs and claims, and take full responsibility for the content of the manuscript.


\begin{thebibliography}{99}

\bibitem{ABKKW}
N.~Alon, G.~Brightwell, H.~A.~Kierstead, A.~V.~Kostochka and P.~Winkler,
Dominating sets in $k$-majority tournaments,
\emph{J. Combin. Theory Ser. B} 96 (2006), 374--387.

\bibitem{AlonVoting}
N.~Alon,
Voting paradoxes and digraphs realizations,
\emph{Adv. in Appl. Math.} 29 (2002), 126--135.
\bibitem{BCT}
R.~Bourneuf, P.~Charbit, and S.~Thomassé,
A dense neighborhood lemma: Applications of partial concept
classes to domination and chromatic number,
in: \emph{2025 IEEE 66th Annual Symposium on Foundations of
Computer Science (FOCS)}, 2025.

\bibitem{CRW}
M.~Charikar, P.~Ramakrishnan, and K.~Wang,
Approximately dominating sets in elections,
in: \emph{Proceedings of the 2026 Annual ACM--SIAM Symposium
on Discrete Algorithms (SODA)}, SIAM, 2026, 1747--1760.

\bibitem{CosteEtAl}
J.~Coste, B.~Flesch, J.~D.~Laison, E.~McNicholas and D.~Miyata,
Approval gap of weighted $k$-majority tournaments,
\emph{Theory Appl. Graphs} 11 (2024), no.~1, Article~3, 1--14.

\bibitem{ErdosProblem}
P.~Erd\H{o}s,
On a problem in graph theory,
\emph{Math. Gaz.} 47 (1963), 220--223.

\bibitem{ErdosMoser}
P.~Erd\H{o}s and L.~Moser,
On the representation of directed graphs as unions of orderings,
\emph{Magyar Tud. Akad. Mat. Kutat\'o Int. K\"ozl.} 9 (1964), 125--132.

\bibitem{Fidler}
D.~Fidler,
A recurrence for bounds on dominating sets in $k$-majority tournaments,
\emph{Electron. J. Combin.} 18 (2011), Paper~P166.

\bibitem{Laslier}
J.-F.~Laslier,
\emph{Tournament Solutions and Majority Voting},
Springer, Berlin, 1997.

\bibitem{McGarvey}
D.~C.~McGarvey,
A theorem on the construction of voting paradoxes,
\emph{Econometrica} 21 (1953), 608--610.

\bibitem{MilansSchreiberWest}
K.~G.~Milans, D.~H.~Schreiber and D.~B.~West,
Acyclic sets in $k$-majority tournaments,
\emph{Electron. J. Combin.} 18 (2011), Paper~P122.

\bibitem{Moon}
J.~W.~Moon,
\emph{Topics on Tournaments},
Holt, Rinehart and Winston, New York, 1968.
\bibitem{ShapiraYuster}
A.~Shapira and R.~Yuster,
On Ramsey properties of $k$-majority tournaments,
\emph{arXiv:2603.04174} (2026).
\bibitem{Stearns}
R.~Stearns,
The voting problem,
\emph{Amer. Math. Monthly} 66 (1959), 761--763.
\bibitem{SzekeresSzekeres}
E.~Szekeres and G.~Szekeres,
On a problem of Sch\"utte and Erd\H{o}s,
\emph{Math. Gaz.} 49 (1965), no.~369, 290--293.
\end{thebibliography}
\end{document}